\newcounter{conj}
\setlist{nolistsep}
\newtheoremstyle{plain}{3mm}{3mm}{\slshape}{}{\bfseries}{.}{.5em}{}
\newtheoremstyle{definition}{2mm}{2mm}{}{}{\bfseries}{.}{.5em}{}
\theoremstyle{plain}
\newtheorem{theorem}{Theorem}
\newtheorem{conjecture}[conj]{Conjecture}
\theoremstyle{definition}
\theoremstyle{plain}
\newtheorem*{namedthm}{\namedthmname}
\newcounter{namedthm}
\newcommand{\C}{\mathscr{C}}
\title{On sums, products and slices of some Cantor sets}
\author{Aritro Pathak}
\date{}
\begin{document}

\maketitle

\begin{abstract}

     The questions of the measure and finding open intervals in certain sets of sums and products of elements of the middle third Cantor set $\C$ (or a variant of it), have generated considerable interest recently. A broad general framework that makes it possible to deal with these questions was outlined by Astels. The question of finding the measure of $\C \cdot \C$ was considered recently in an article by Athreya, Reznick and Tyson. Astels' methods apply to product sets upon considering a generalized logarithmic Cantor set, while Athreya, Reznick and Tyson's methods become difficult in dealing with sums or products of $m$'th powers as $m$ becomes large. With a new elementary dynamical technique, we can deal with both the questions of sums and products in a satisfactory way, and the proofs are of the same level of complexity as that of an elementary proof of the fact that $\C + \C=[0,2]$. Further, some observations are made on the question of intersections of one central Cantor set with an affine image of another central Cantor set both with arbitrary parameters.
    
 \end{abstract}

\section{Introduction.}

\ \ \  The study of sums and products of Cantor sets have been motivated by problems in dynamical systems, harmonic analysis, geometric measure theory and number theory; see \cite{Mult, Takahashi} and the references therein.
 
 Let $\C$ be the usual middle third Cantor set:

\[ \C=\Big\{ \sum\limits_{i=1}^{\infty}\frac{t_{i}}{3^{i}}: t_{i} \in \{0,2\} \Big\}  \]
 
 The sets $\mathscr{C}+ \mathscr{C}$ and $\mathscr{C}-\mathscr{C}$ have been well studied  \cite{Randolph, Utz, Majumdar,Halmos}.

One of the folklore proofs that shows $\frac{1}{2}(\C + \C) =[0,1]$ considers the ternary expansion of any real number $x\in [0,1]$, and shows constructively that there are always two elements belonging to $\C$, whose average gives us $x$. Another elementary argument considers the set $\C \times \C \in \mathbb{R}^{2}$ and the lines $x+y=a$ with $a \in [0,2]$, and uses compactness arguments to conclude that this line always contains a point of $\C \times \C$. In a recent article by Jayadev Athreya, Bruce Reznick, Jeremy Tyson \cite{Cantor}, the measure of the set $\C \cdot \C$ was studied, and some obvious gaps were noted within $[0,1]$ which could not be covered by this set. \footnote{They also describe the set $\frac{\mathscr{C}}{\mathscr{C}}:= \big\{\frac{u}{v}: u,v \in \mathscr{C}, v \neq 0 \big\}$.}

In their article they also pose the apparent following conjecture \cite{Cantor}:. 

\begin{conjecture} 
Every  $u\in [0,1]$ can be  written as $x_{1}^2 + x_{2}^2 +x_{3}^2 +x_{4}^2$, with  $x_{i}\in \C$.
\end{conjecture}

It turns out this conjecture is true, and the proof follows easily from earlier work of Newhouse \cite{Newhouse} and  Astels \cite{Astels}. Theorem 2.2, Part 1\footnote{This theorem follows from theorem 2.3 of Astels' paper which is actually a result by Newhouse \cite{Newhouse}.}, from Astels' paper, or the more general Theorem 2.4 Part 1, is enough to prove the above conjecture. \footnote{A more recent paper \cite{Sum} proves this conjecture from \cite{Cantor}, and there is more recent work \cite{Guo} on the general question of the sum of powers, both of which extend the original methods of \cite{Cantor}, but both of which also seem unaware of the work of Astels, which fundamentally work very well for such Waring-Hilbert type questions, as we explain.}

Astels' approach works well for the question of finding intervals in the set $\big\{ x_1 ^2 +x_2 ^2 +x_3 ^2 +x_4 ^2: x_1,x_2,x_3,x_4 \in \C \big\}$, and pursuing his strategy for the second problem we have to construct certain generalized logarithmic Cantor sets which reduces the multiplicative problem to an additive problem. In Athreya, Resnick and Tyson's paper this question was addressed differently, and then the topological structure of the set $\C \cdot \C$ was further studied in a different paper \cite{Mult}.  There has been more recent work on the Lebesgue measure of the product of the Cantor sets \cite{Marchese}, which extends the original methods of \cite{Cantor}.

The main ingredient in \cite{Cantor} is the following simple theorem, which had earlier appeared in \cite{Cabrelli2}, and which was again employed for use in \cite{Sum}. 

\begin{theorem}
   Suppose $\{K_i\} \subset \mathbb{R}$ are nonempty compact  sets such that $K_{1}\supset K_2 \supset K_3 \supset ..$, and $K= \cap K_{i}$. If $F: \mathbb{R}^{m} \to \mathbb{R}$ is continuous, then $F(K^{m})=\cap F(K_{i}^{m})$.
\end{theorem}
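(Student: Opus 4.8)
The plan is to prove the two inclusions $F(K^m) \subseteq \bigcap_i F(K_i^m)$ and $\bigcap_i F(K_i^m) \subseteq F(K^m)$ separately, where only the second genuinely uses the hypotheses. Before either, I would record the elementary identity $K^m = \bigcap_i K_i^m$: a tuple $(x_1,\dots,x_m)$ lies in $K^m$ iff each coordinate lies in every $K_i$, which is exactly the condition for the tuple to lie in every $K_i^m$. (This holds for any family; nesting is not needed here.) The nesting of the $K_i$ does, however, guarantee that the sets $K_i^m$ are themselves nested, $K_1^m \supseteq K_2^m \supseteq \cdots$, and each is compact, a finite product of compact sets being compact. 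The first inclusion is then purely set-theoretic: if $y = F(x)$ with $x \in K^m = \bigcap_i K_i^m$, then $x \in K_i^m$ for every $i$, so $y \in F(K_i^m)$ for every $i$, hence $y \in \bigcap_i F(K_i^m)$. Neither continuity nor compactness is needed for this direction.

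The content lies in the reverse inclusion. Fix $y \in \bigcap_i F(K_i^m)$, so that for each $i$ there is some $x^{(i)} \in K_i^m$ with $F(x^{(i)}) = y$. I would then form the fibers $L_i := K_i^m \cap F^{-1}(\{y\})$. Since $F$ is continuous the single-point set $\{y\}$ has closed preimage, so each $L_i$ is a closed subset of the compact set $K_i^m$, hence compact; each $L_i$ is nonempty because it contains $x^{(i)}$; and the $L_i$ are nested decreasing because the $K_i^m$ are. By the Cantor intersection theorem for a nested sequence of nonempty compact sets, $\bigcap_i L_i \neq \emptyset$. Any point $x$ in this intersection satisfies $x \in \bigcap_i K_i^m = K^m$ together with $F(x) = y$, so $y \in F(K^m)$, which closes the argument.

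The main obstacle is exactly this reverse inclusion, and it is the one place where every hypothesis is consumed: continuity is used to make the fibers $F^{-1}(\{y\})$ closed, so that the $L_i$ remain compact, and compactness of the $K_i$ is what powers the nested-intersection step guaranteeing that a common preimage survives in the limit. Without compactness the intersection of the $L_i$ could be empty, and the equality would fail. I would present the fiber version above, but an equivalent and perhaps more hands-on route avoids naming the fibers: since all $x^{(i)}$ lie in the compact set $K_1^m$, extract a subsequence converging to some $x$; the tail of that subsequence lies in each closed set $K_i^m$, so $x \in K_i^m$ for every $i$, whence $x \in K^m$, and $F(x) = y$ by continuity. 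Either formulation makes transparent that continuity and compactness are precisely what upgrade the trivial inclusion to an equality.
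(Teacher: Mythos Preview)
Your proof is correct and complete. Both formulations you give --- the nested-fiber version via $L_i = K_i^m \cap F^{-1}(\{y\})$ and the sequential-compactness version --- are standard and valid, and your bookkeeping of which hypotheses are used where is accurate.

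Note, however, that the paper does not actually prove this theorem: it is stated there as a tool quoted from Athreya, Reznick and Tyson \cite{Cantor}, without proof, so there is no ``paper's own proof'' to compare against. Your argument is the natural one and would be what any reader supplies mentally.
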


It's not difficult to see, by considering $\C$ and the usual definition as the intersection of infinitely many nested compact sets, (at the $k+1$'st stage the set $K_{k+1}$ is obtained by deleting $2^{k}$ many open intervals each of length $1/3^{k}$ from the set $K_{k}$ at the $k$'th stage ), and the function $F(x,y)=x+y$, that $F(K_{i}^{2})=[0,2], \forall i\in \mathbb{N}$, and thus $\C +\C=[0,2]$. Even just considering the function $F(x,y)=xy$, and considering the sets $F(K_{i}^{2})$ for all $i$, an estimate of the size of the set of common intersection is tedious to determine.

One could use this same technique and consider $F(x_{1},x_{2},..,x_{t})=x_{1}^{m}+x_{2}^{m}+..+x_{t}^{m}$ for any positive integers $t,m$, but then it becomes progressively more difficult to keep track of the intersections of the sets $F(K_{i}^{m})$ for all $i$, as $m$ is made large. For $m=2$ and $t=4$ this has been done in \cite{Sum}, but this is where the generalized construction of a Cantor set by Astels, and his theorems prove to be effective in order to get results.

By considering only two square terms in the statement of the above conjecture, we'll have to leave out the interval $(2/9,4/9)$, with three terms we will have to leave out $(3/9,4/9)$ while with four terms we would get an entire open interval $[0,2^{2}]$. It is also clear that by considering $m'$th powers, we would need to consider $2^{m}$ many terms to get all of $[0,2^{m}]$, and with lesser number of terms, we would have to leave out the obvious gaps. 

These questions go back to the celebrated conjecture of Palis that either the sum of the Hausdorff dimensions of typical pairs of dynamically defined Cantor sets is not greater than 1, or the sum of two such Cantor sets contain some open interval. This conjecture was resolved by Yoccoz and Moreira \cite{Yoccoz}. \footnote{When constructing a general dynamically defined Cantor set, at the first stage of dissection we can delete more than one open interval, and continue therein. Our approach would likely need minor modifications depending on the location and size of the open intervals being deleted.}

In our specific context with the central Cantor sets, for a given exponent $m$,we can ask how few terms $t$ in the sum of $m$'th powers one can consider to get an open interval that is contained in their sum. Theorem 2.4 Part 1 of Astels, modified slightly, would give us a good answer here.

In section 2 and 3 of this paper, we consider the $m$'th power of a middle third Cantor set (with $m$ being a positive integer) which is a set of the form,

\begin{equation*}
\C^{(m)}:= \{x^{m}:x\in \C \}
\end{equation*}

with $\C$ being the middle third Cantor set. This generalizes analogously for any central Cantor set.

In Astels paper, an outline is made of the construction of a generalized Cantor set.\footnote{By a generalized Cantor set we mean the set constructed by Astels in full generality; in this paper all results are for the specific case of $\C^{(m)}$.} Consider the $k$'th stage of the canonical construction of such a Cantor set, where we have $2^{k}$ many open intervals, $A_{i}^{(k)}, i=1,2,3,..,2^{k}$. From each one of these, we delete an interval $O_{i}^{(k+1)}$, and are left with two remaining intervals $A_{i0}^{(k+1)}$ and $A_{i1}^{(k+1)}$. For each $i$ ranging from $1$ to $2^{k}$, we get two new sets as above, and along with suitable relabelling, we assign a superscript $(k+1)$ to each of these resulting $2^{k+1}$ new intervals obtained for the $(k+1)'$th stage of construction. We set 

\[\tau(A_{i}^{(k)})=\text{min}\Bigg(\frac{|A^{(k+1)}_{i0}|}{|O_{i}^{(k+1)}|},\frac{|A_{i1}^{(k+1)}|}{|O_{i}^{(k+1)}|}\Bigg)  \]

The thickness of the Cantor set is defined as: $\tau(\C^)=\underset{A_{i}^{(k)}}{\inf}\tau(A^{(k)}_{i}) $. Also consider the number $\gamma(\C^{(m)})=\frac{\tau(\C^{(m)})}{1+\tau(C^{(m)})}$. Astel's theorem implies that with $t_{m}:=\lceil \frac{1}{\gamma(C^{m})} \rceil$ many terms, we would have an open interval in the sumset $\C^{(m)}+...+\C^{(m)} ( \text{added} \  t_{m} \ \text{times})$. 

When considering the set $\C^{(m)}$, $\tau(\C^{(m)})=\frac{1}{2^{m}-1}, \gamma(\C^{(m)})=\frac{1}{2^{m}}$, and thus with $2^{m}$ many terms, we would contain an interval (in fact, we can cover all of $[0,2^{m}]$). 

However, if we were to only restrict to $\C^{(m)} \cap [(\frac{2}{3})^{m},1]$, then we again have a generalized Cantor set $\C'^{(m)}=\C^{(m)} \cap [(\frac{2}{3})^{m},1]$, and in that case, the thickness increases, and the $\gamma$ factor becomes $\frac{7^{m}-6^{m}}{8^{m}-6^{m}}$, and thus for large $m$, we could do with approximately $ \lceil (\frac{8}{7})^{m} \rceil $ many terms. In fact, as the conditions of part 2 of Astels' theorem are satisfied, we would have that $\C'^{(m)}+...+\C'^{(m)} ( \text{added} \  t_{m} \ \text{times}) $ is an interval that has measure approximately $\lceil (\frac{8}{7})^{m} \rceil (1- (\frac{2}{3})^{m})$. 
Clearly if we considered the set $\C^{(m)}\cap [(\frac{8}{9})^{m},1]$, we would get an interval of smaller measure, but with even fewer number of terms. 

As mentioned earlier however, for Astels' methods to apply for the question of products of Cantor sets, one would need to first consider certain generalized logarithmic Cantor sets to reduce the problem to a corresponding question on sums of such generalised Cantor sets. On the other hand, as mentioned earlier, the methods of \cite{Cantor},\cite{Mult} were considered for the problem of considering products of two Cantor sets, but become tedious on the question of the sums of $\C^{(m)}$.

In this paper, we use an elementary dynamical argument that gives us an open interval contained within the set $\C \cdot \C \cdot \C \cdot \C$, and also the same basic argument gives us an open interval within the set $\C^{(m)}$ summed sufficiently many times.

The method is short and different from other recent methods used in these problems. We state our two basic illustrative results below.

\begin{theorem}\label{thm:prod}
The set $[(\frac{8}{9})^{3},\frac{8}{9}]\subset \C \cdot \C \cdot \C \cdot \C$, and each $x\in [(\frac{8}{9})^{3},\frac{8}{9}]$ can be written as $x=c_1 \cdot c_2 \cdot c_3 \cdot c_4$ where each $c_i \in \C \cap [\frac{8}{9},1]$ for $i \in \{1,2,3,4\}$.
\end{theorem}

\begin{theorem}
Let $m\geq 1$ be an integer. Consider $t_{m}=2\cdot \lceil (\frac{3}{2})^{m-1} \rceil$, and write $s_{m}=\frac{1}{2}\cdot t_m$. Then the set $\C^{(m)}+\C^{(m)}+..+\C^{(m)}$, where $\C^{(m)}$ is added $t_{m}$ many times, contains the interval $I=[(s_m+1)(\frac{2}{3})^{m}+(s_m-1),(s_m-1)(\frac{2}{3})^{m}+(s_m+1)]$ of measure $2(1-(\frac{2}{3})^{m})$. Each $x\in I$ can be written in the form $c_1 ^{m}+\dots +c_{t_m}^{m}$ where each $c_1,\dots, c_{t_m} \in \C \cap [\frac{2}{3},1]$.
\end{theorem}

We do not find the largest possible union of open intervals in the sums and products of Cantor sets with this method, but demonstrate that the question of finding an open interval is of a similar level of complexity as our elementary proof that shows $\C+\C=[0,2]$. Further, with our method, we demonstrate an algorithm that constructs Cauchy sequences of elements of $\C$ that converge to the elements of the Cantor set that make up the product or sum expression.

This is a slight variation of the method of an earlier paper \cite{Aritro} that gives us $\C +\C =[0,2]$, which we restate in the Appendix. Although the result is well known, the method serves as a basic illustration of the ideas used in Section 2 and 3 of this paper. In Section 2, we outline the proof for the case of the product $\C \cdot \C \cdot \C \cdot \C$. In Section 3, we outline our argument for the sum of the sets $C^{(m)}$.

The methods of Section 2 of this paper will be seen to be applicable for questions on arbitrary central $\C_{\lambda}$ Cantor sets, ($C_{\lambda}$ is the usual central Cantor set where at each $k$'th stage, a middle $(1-2\lambda)$ fraction open interval is cut out from the intervals existing at the $(k-1)$'th stage.) and the question of finding open intervals in product sets such as
$\C_{\lambda_1} \cdot \C_{\lambda_2}\cdot..\cdot \C_{\lambda_m}$. This question was raised as an open question in the concluding section of recent work by Takahashi \cite{Takahashi}, where the study of products of Cantor sets was motivated by certain dynamical questions. In a manuscript in preparation, we are considering these questions.

Again, the methods of Astels considering logarithmic Cantor sets should in principle give us answers here (or Theorem 1, however difficult, would still work in principle), but the methods of Section 2 of this paper will generalize without more difficulty or increase of complexity.
For the corresponding question on sumsets, there has been earlier work by Cabrelli, Hare and Molter \cite{Cabrelli1,Cabrelli2}.
\footnote{Further, our methods for proving Theorem 3 can be easily extended for the case where $m$ is any real positive number, not necessarily an integer; one key ingredient in the bounds used for Theorem 3 is the binomial theorem to first approximation, and that can be easily adopted for the case of $m$ not being an integer. The method can also be easily extended for more non-trivial polynomial or rational expressions involving sums and products of Cantor Sets. }

When restricting to two Cantor sets $C_a,C_b$, the dimension of their sumset is equivalent to the dimension of the projection of the set $C_a \times C_b \subset [0,1]^{2}$ onto the diagonal line $x=y$ in the plane.\footnote{All the results on sumsets of  Cantor sets we have considered thus far can be equivalently framed as projection results on hyperplanes in $R^{m}$ for $m\geq 2$.} When the parameters $a,b$ satisfy the assumption that $\log a /\log b$ is irrational, it was shown by Peres and Shmerkin \cite{Shmerkin} that the dimension of the sumset is ``large": specifically that $dim(C_a +C_b)=\text{min}(dim(C_a)+dim(C_b),1)$. A natural dual question is that of the dimension of the slices of $C_a \times C_b$ by straight lines, and under the irrationality assumption a conjecture is made in the concluding section of Peres, Shmerkin's paper.

In full generality, without the irrationality assumption, we end with brief observations on this slicing question, in Section 4, which we intend to develop further in future work, as outlined there.

%When $\lambda_1, \lambda_2$ are multiplicatively independent, i.e $\frac{\log(\lambda_1)}{\log(\lambda_2)}$ is irrational, it is known\cite{Shmerkin} that there is no dimension drop in the sum of the two Cantor sets, $\C_{\lambda_1} +\C_{\lambda_2}$, \footnote{When the two numbers are multiplicatively dependent, a well known argument (see for ex, pp 3-4 of \cite{Shmerkin}) shown there is a dimension drop}. An interesting question could be to consider the sum of two Cantor sets such as $\C_{\lambda_1}^{(2)} +\C_{\lambda_2}^{(2)}$, and to study whether there is a dimension drop. Perhaps  our methods can have some insight into such non-linear versions of this problem addressed by Peres, Shmerkin.

%In this paper we make progress towards this conjecture. We show that there is a set $S \subset [0,4]$ of measure approximately $2.75$  such that indeed every $u \in S$ can be written as the sum of four squares of Cantor set numbers. Further, $S \cap[0,1]$ approximately has measure $0.56$. The technique draws upon an earlier elementary proof by the author, of the well known fact that every real number in $[0,2]$ is the sum of two numbers belonging to the Cantor set \cite{Aritro}. For the sake of completeness, we reproduce this earlier proof here as well.

\section{Open intervals in the product of Cantor sets.}

We demonstrate that the set $[(\frac{8}{9})^{3},\frac{8}{9}]\subset \C \cdot \C \cdot \C \cdot \C$. The arguments of the proof can be repeated in a straightforward way to locate other closed intervals within $\C \cdot \C \cdot \C \cdot \C$, which is something we don't do here. The method used here is a slight variation on the method of the proof in the Appendix. As mentioned earlier, the analysis of the measure and structure of the set $\C \cdot \C$ has already been carried out in other papers \cite{Cantor}, \cite{Mult}. While our technique doesn't seem to enable us to state results for just the product set $\C \cdot \C$, it gives an alternate elementary and dynamical way to tackle a slightly weaker problem, and the same techniques applied to this problem are later made to work in a straightforward way on the problem of sums of $m'th$ powers of Cantor set elements. 

%\begin{theorem}\label{thm:prod}
The set $[(\frac{8}{9})^{3},\frac{8}{9}]\subset \C \cdot \C \cdot \C \cdot \C$.
%\end{theorem}

\begin{proof}

(Of \cref{thm:prod}:) Consider any point $x\in [(8/9)^{3} , 8/9]$ and take $a_1=b_{1}=8/9$, and $c_{1}=d_{1}=1$. Thus the entire dynamics is within the interval $[(8/9),1]$. Similar to the proof in the Appendix, the idea is to construct four different Cauchy sequences $a_{k},b_{k},c_{k},d_{k}|_{k=1}^{\infty}$, with each of $a_{k},b_{k},c_{k},d_{k}$ (for $k$ from $1$ to $\infty$) being elements of $\C$, so that in the limit we have four points, $a_{\infty}, b_{\infty}, c_{\infty}, d_{\infty}$ belonging to $\C$ with $x=a_{\infty}\cdot b_{\infty}\cdot c_{\infty}\cdot d_{\infty}$. We also write the product $p_{k}:=a_{k}\cdot b_{k} \cdot c_{k}\cdot d_{k}$ for every positive integer $k$, and the difference $\Delta_{k}:=x-p_{k}$.

If $x= (8/9)^3, (8/9)^{2}, \  \text{or} \ (8/9)$ we are done. Note that $p_{1}=(8/9)^{2}$.

\begin{enumerate}

\item First assume that $(8/9)^{3}<x< (8/9)^{2}$. Consider the value $x_{0} \in ((8/9),1)$ so that $x=a_{1}\cdot b_{1} \cdot x_{0} \cdot d_{1}=(8/9)^{2}\cdot x_{0}$. Unless $x_{0}\in \C $ in which case we are done, consider the open interval that $x_{0}$ falls in the interior or the boundary of, in the process of iteratively constructing $\C \cap [8/9,1]$ and take $c_{2}$ to be the left end point of this interval, and take $a_{2}=a_{1}, b_{2}=b_{1}, d_{2}=d_{1}$. Then $\Delta_{2}= (8/9)^{2}(x_{0}-c_{2})<\frac{1}{3^{3}}$, since $(x_{0}-c_{2})$ is bounded by the length of the cut out interval under consideration, and the biggest possible such gap within $[8/9,1]$ has length $1/3^{3}$. Consider the unique $k_{2}\geq 3$ so that $\frac{1}{3^{k_{2}+1}}\leq \Delta_{2}=x-p_{2}<\frac{1}{3^{k_{2}}}$.

The aim is to iteratively increase the sequences $a_{k},b_{k}$ and decrease the sequences $c_{k},d_{k}$ in a controlled way so that as $k\to \infty$, we have $\Delta_k \to 0$. Notice that we can hypothetically next define $c_{3}$ so that the difference $c_2 -c_3 = 2/3^{\tilde{k_{2}}+1}$, with $\tilde{k_2}\geq k_2$ and $\Tilde{k_2}$ being an integer, while similarly we can define $d_{3}$ so that $d_2 -d_3=2/3^{3}$ or lesser, and further similarly it is possible to define $a_{3},b_{3}$ so that the differences follow the method of proof in the Appendix.

So at this next $3$'rd stage of iteration, we try to increase either one or both of $a_{2}, b_{2}$, in order to decrease the difference between $x$ and $p_{3}$ from the value $\Delta_{2}$. If we only take $a_{3}=a_{2}+ \frac{2}{3^{k_{2}+1}}$, and keep $b_{3}=b_{2},c_{3}=c_{2}, d_{3}=d_{2}$, then $(\frac{8}{9})^{3}\cdot (\frac{2}{3^{k_{2}+1}})< p_{3}-p_{2}=b_{2}\cdot c_{2}\cdot d_{2}\cdot \frac{2}{3^{k_{2}+1}}< \frac{2}{3^{k_{2}+1}}$. Call the number $\alpha\approx (\frac{8}{9})^{3}(\approx 0.70)$. Thus, $\Delta_{3}=x-p_{3}=(x-p_{2})+(p_{2}-p_{3})$. Recall that $\frac{1}{3^{k_{2}+1}}\leq (x-p_{2})< \frac{1}{3^{k_{2}}}$. Thus we have, $-\frac{1}{3^{k_{2}+1}}=\frac{1}{3^{k_{2}+1}} -\frac{2}{3^{k_{2}+1}}< \Delta_{3}<\frac{1}{3^{k_{2}}}-\frac{2\alpha}{3^{k_{2}+1}}$. In this process above, the upper bound is greater than $\frac{1}{3^{k_{2}+1}}$, and if in this process, $\Delta_{3}$ is actually  less than $\frac{1}{3^{k_{2}+1}}$ we are done, otherwise if in the process $\Delta_{3}$ is greater than $\frac{1}{3^{k_{2}+1}}$, i.e. now $\frac{1}{3^{k_{2}+1}}<\Delta_{3}<\frac{1}{3^{k_{2}}}-\frac{2\alpha}{3^{k_{2}+1}}$, then we take $k_{3}=k_{2}$,  $b_{4}=b_{3}+\frac{2}{3^{k_{3}+1}}, a_{4}=a_{3}, c_{4}=c_{3}, d_{4}=d_{3}$. Again, $ \frac{2\alpha}{3^{k_{3}+1}}<p_{4}-p_{3}< \frac{2}{3^{k_{3}+1}}$ and thus \\ $ -\frac{1}{3^{k_{3}+1}}<\Delta_{4}=(x-p_{4})=\Delta_{3}+(p_{3}-p_{4})< \frac{1}{3^{k_{3}}}-\frac{4\alpha}{3^{k_{3}+1}}<\frac{1}{3^{k_{3}+1}}$.

Thus the absolute magnitude of the error term $\Delta_{4}$ is now bounded by $\frac{1}{3^{k_{3}+1}}$. Choose the unique $k_{4}\geq k_{3}+1$ so that now $\frac{1}{3^{k_{4}+1}}\leq |\Delta_{4}|< \frac{1}{3^{k_{4}}}$. Now it is clear that we can mimic the  proof of the previous step and iterate the argument, and further decrease the error term in the next iterations so that successively we get $\frac{1}{3^{k_{l}+1}}<|\Delta_{l}|<\frac{1}{3^{k_{l}}}$ where necessarily $k_{l}\geq k_{l-1}+1$. The $\alpha$ factor remains the same as we progress along the iteration, and the bounds work in the same manner, as $k\to \infty$. Thus clearly in the limit we find four elements $a_{\infty},b_{\infty},c_{\infty},d_{\infty} \in \C$ so that $x=a_{\infty}\cdot b_{\infty}\cdot c_{\infty}\cdot d_{\infty}$.

\bigskip 

\item The argument for the case where $(8/9)^{2}<x<(8/9)$ proceeds in the similar manner; instead of decreasing $c_{1}$ from the value $1$, we instead increase the value $a_{1}$ from $(8/9)$ to the right endpoint of the cut out interval where $x_{0}$ is located, where similar to the previous case, we define $x_{0}$ to be the number so that $x=x_{0}\cdot b_{1}\cdot c_{1}\cdot d_{1}$. The $\alpha$ factor of course remains the same as before, and then the argument from the previous case can thus be used here as well.

\end{enumerate}
\end{proof}

As mentioned before, this set $[(8/9)^{3},(8/9)]$ can be easily expanded by similar other choices of initial values $a_{1},b_{1},c_{1},d_{1}$. In particular we can find smaller open intervals whose right end points are arbitrarily close to the value of 1, such as $\C \cap [\frac{3^{k}-1}{3^{k}},1]$, for $k\geq 3$. In these cases, the $\alpha$ factor only goes closer to 1, and the algorithm runs as before. \footnote{However, if we were to consider $\C \cap [2/9,3/9]$, then the corresponding $\alpha$ factor becomes much smaller, and in that case we would require the product of more terms for the argument to work.}

As mentioned in the introduction, this is a construction that can be made to work for the corresponding question of products of different central Cantor sets, which was raised in the concluding section of \cite{Takahashi}. With a more intricate version of essentially the same argument used here, in a future manuscript we are considering products of different central Cantor sets, of the form $C_{\lambda_1}\cdot C_{\lambda_2}\dots C_{\lambda_k}$ for different integers $k$ and parameter sets $\{\lambda_1\dots \lambda_k \}$ to find open intervals.

\section{Sums of $\C^{(m)}$.}

Now we finally come to the question of sums of $m'$th powers, with $m\geq 2$, and finding open intervals within them. This is a problem where Astels' more involved but general methods gives stronger results, but where the methods of \cite{Cantor},\cite{Mult} become difficult as $m$ grows larger.

%We state our theorem:

%\begin{theorem}
%Let $m\geq 1$ be an integer. Consider $t_{m}=2\cdot \lceil (\frac{3}{2})^{m-1} \rceil$, and write $s_{m}=\frac{1}{2}\cdot t_m$. Then the set $\C^{(m)}+\C^{(m)}+..+\C^{(m)}$, where $\C^{(m)}$ is added $t_{m}$ many times, contains the interval $I=[(s_m+1)(\frac{2}{3})^{m}+(s_m-1),(s_m-1)(\frac{2}{3})^{m}+(s_m+1)]$ of measure $2(1-(\frac{2}{3})^{m})$.
%\end{theorem}

\begin{proof}

(of Theorem 3:) Consider the sequences $a^{(1)},a^{(2)},...,a^{(s_{m})}$, $b^{(1)},b^{(2)},..,b^{(s_{m})}$ with  $a^{(1)}_1=a^{(2)}_1=...=a^{(s_{m})}_1=\frac{2}{3}$, and $b^{(1)}=b^{(2)}=...=b^{(s_{m})}=1$. 

In this case, for each integer $k\geq 1$, we define 

\[S_{k}=(a_{k}^{(1)})^{m}+..+(a_{k}^{(s_{m})})^{m}+(b_{k}^{(1)})^{m}+..+(b_{k}^{(s_{m})})^{m}\]

Note that $S_{1}=s_m((\frac{2}{3})^{m}+1)$.

Consider any $x\in I$. If $x=S_{1}$ then of course we are done. Consider w.l.o.g, $x< S_{1}$. (The case for $x> S_{1}$ proceeds in an exactly similar way.)

In this case we consider the value $x_{0}\in [2/3,1]$ so that $x=(a_{1}^{(1)})^{m}+(a_{1}^{(2)})^{m}+..+(a_{1}^{(s_{m})})^{m} +x_{0}^{m}+(b_{1}^{(2)})^{m}+..+(b_{1}^{(s_{m})})^{m} $. As before, if $x_0\in \C$ we are done, otherwise consider the open interval, of length some $1/3^{k_{1}}$ with $k_{1}\geq 2$, that $x_{0}$ falls in the interior or the boundary of, in the process of iteratively constructing $\C$, and consider the left end point of this cut interval, and call it $b_{2}^{(1)}$, while keeping all other sequences constant between $k=1$ and $k=2$ stages of the sequence. Again consider the error term $\Delta_{k}= x - S_{k}$. In this situation, the difference $\Delta_{2}=x-S_{2}$ is clearly bounded from above by $(b_{2}^{(1)}+ \frac{1}{3^{k_{1}}})^{m} - (b_{2}^{(1)})^{m}=\frac{1}{3^{k_{1}}}\cdot (...)$ where there are exactly $m$ terms in the bracket above, each of which is bounded from above by 1 (and bounded from below by $(\frac{2}{3})^{m-1}$). Consider the unique integer $k_{2}\geq k_{1}$ so that $\frac{m}{3^{k_{2}+1}}\leq \Delta_{2}< \frac{m}{3^{k_{2}}}$. Note\footnote{similar to the previous section} that here it is possible to decrease $b_{2}^{(1)}$ by an amount $\frac{2}{3^{\tilde{k_{2}}+1}}$ where the integer $\tilde{k_2}\geq k_{2}$.

Now we increase $a_{2}^{(1)}$, and consider, $a_{3}^{(1)}=a_{2}^{(1)}+\frac{2}{3^{k_{2}+1}}$, while keeping all the terms constant when passing from the $k=2$ to $k=3$ stages of the sequence. The increment in the value of $S_{3}$ from the value $S_{2}$ is given by $(a_{2}^{(1)}+\frac{2}{3^{k_{2}+1}})^{m}-(a_{2}^{(1)})^{m}=\frac{2}{3^{k_{2}+1}}\cdot (...)$ where again there are $m$ terms in the bracket each of which is bounded from above by $1$ and bounded from below by $(\frac{2}{3})^{m-1}$. The objective here is to make the error lower so that $|\Delta_{3}|<\frac{m}{3^{k_{2}+1}}$. 

It should now be clear from the above statements that at least one of the $a^{(i)}$, for $i$ varying from 1 to $s_{m}$, needs to be increased, and in the extreme case, we might possibly end up needing $u_{m}$ many terms where $u_{m}$ is the minimum integer so that we have $u_{m}\cdot \frac{2m}{3^{k_{2}+1}}(\frac{2}{3})^{m-1}>(\frac{m}{3^{k_{2}}}-\frac{m}{3^{k_{2}+1}})=\frac{2m}{3^{k_{2}+1}}$, and thus we exactly have $u_{m}=\lceil (\frac{3}{2})^{m-1} \rceil=s_{m}$. 

In fact, in one extreme case, when $\Delta_2=\frac{m}{3^{k_{2}+1}}$, then it is sufficient to increase exactly one of the $a^{l}$ terms for any $1\leq l \leq s_m$, and w.l.o.g letting $l=1$, we can have that $\frac{m}{3^{k_{2}+1}} -\big(\frac{2}{3}\big)^{m-1}\frac{2m}{3^{k_{2}+1}} > \Delta_3 > \frac{m}{3^{k_{2}+1}}- \frac{2m}{3^{k_{2}+1}}=-\frac{m}{3^{k_{2}+1}}$, and thus $|\Delta_{3}|<\frac{m}{3^{k_{2}+1}}$. In the other extreme case, when we have $\Delta_2=\frac{m}{3^{k_{2}}}$, then we use $\lceil (\frac{3}{2})^{m-1} \rceil$ many terms as noted above to ensure that $\Delta_3$ is brought below the value of $\frac{m}{3^{k_2 +1}}$. Along with the above, since the magnitude of each of the individual decrements are bounded from above by $\frac{2m}{3^{k_2 +1}}$, for any value of $\frac{m}{3^{k_{2}+1}}\leq \Delta_2 \leq \frac{m}{3^{k_{2}}}$, by using at most $s_m$ many of the $a^{(l)}$ terms, we can ensure that $|\Delta_{3}|<\frac{m}{3^{k_{2}+1}}$.

In the subsequent stages, we might have to decrease upto $s_{m}$ of the $b_{k}^{(i)}$ terms where $i$ ranges from 1 to $s_{m}$, or at a different step increase upto $s_{m}$ of the $a_{k}^{(i)}$ terms, in order to ensure that the absolute values of the errors terms $\Delta_{k}$ are bounded at each succesive stage by a higher exponent of $1/3$. Thus in total we need $2s_{m}=t_{m}$ many terms. 

The proof for the case where $x>S_{1}$ also follows in an identical way; in the beginning, we increase $a_{1}^{(1)}$ to $a_{2}^{(1)}$ by an appropriate amount, and then enforce our dynamical argument. In the end we find $t_{m}$ many limit points so that the sum of their $m$
th powers is exactly the value $x$.  
\end{proof}

Again we should remark that for $k\geq 3$, by focusing on a smaller interval $[1-\frac{1}{3^{k}},1]$, with this method we should get, with $2\cdot \lceil (\frac{3^{k}}{3^{k}-1})^{m-1} \rceil$ terms, an open interval of length $2\cdot(1-(\frac{3^{k}-1}{3^{k}})^{m})$.

\section{Slices of Cartesian products of two central Cantor sets.}

\ \ \ \ Until now, all results on sums of generalized or central Cantor sets can be interpreted as projection theorems on  hyperplanes in $\mathbb{R}^{m}$. Here we briefly discuss some aspects of the dual slicing question for central Cantor sets; the intersection of one central Cantor set with the affine image of another central Cantor set.

In the concluding section of Peres, Shmerkin's paper \cite{Shmerkin} on the dimension of the sumset of two central Cantor sets $C_a, C_b$ with the assumption of $(\log a/\log b)$ being irrational, it is conjectured that for every $\theta \in (0,\pi) \backslash \{ \pi/2\}$ there is a positive measure set of lines in the direction $\theta$ each of which intersects $C_a \times C_b \subset [0,1]^{2}$ in a set of dimension $dim(C_a)+dim(C_b)-1$ whenever $\dim(C_a)+\dim(C_b)>1$.\footnote{By Furstenberg's conjecture, this is the maximum possible such dimension that can be attained by any slice of $C_a\times C_b$ under the irrationality assumption.}

There has been considerable ongoing interest in problems around  the question of slices of product sets $A\times B$ where $A, B\subset[0,1]$ are respectively invariant under the $\times a,\times b$ maps with $(\log a/\log b)$ irrational, building on the celebrated proofs of the Furstenberg slicing theorem by Shmerkin, Wu and later Austin. See \cite{Shmerkin2},\cite{Shmerkin21},\cite{Wu},\cite{Algom},\cite{Yu},\cite{Austin}.

In full generality, we drop the irrationality assumption here, and consider any two arbitrary central Cantor sets $C_a, C_b$, their product $C_a \times C_b$ and their slices by all lines in a specific direction $\theta \in (0,\pi/2)$.\footnote{An analogous statement holds for any $\theta\in (\pi/2, \pi)$.}. The method here is broadly the same as the basic arguments in the beginning of Section 4 of \cite{Wu}. We prove the following\footnote{The author is thankful to Pablo Shmerkin for communication on this question.}:

\begin{theorem}Assume there is a set S of positive measure of lines with slope $m=\tan \theta$ so that each line of this set intersects $C_a \times C_b$ in a set of Hausdorff dimension at least $t$ with $t\leq \text{max}(0, \text{dim}(C_a)+\text{dim}(C_b)-1)$.\footnote{Due to the Marstrand slicing theorem, we can have at most a set of zero Lebesgue measure of lines that intersect $C_a\times C_b$ in Hausdorff dimension $t> \text{max}(0, \text{dim}(C_a)+\text{dim}(C_b)-1)$. } Then for each fixed pair of integers $(n,m)$, there exists a set of positive measure of lines with slope $m \cdot (\frac{a^{n}}{b^{m}})$, each of which also intersects $C_a \times C_b$ in a set whose Hausdorff dimension is at least $t$. 

\end{theorem}

\begin{proof} Separately we show that the argument is true for the cases (i) when $n=m$, (ii)$m=0$, (iii)$n=0$. Cases  (ii) and (iii) are similar and so we illustrate the method only for Case (ii). The general argument follows by iterating these two separate arguments. 

For Case (i), within $[0,1]^{2}\subset \mathbb{R}^{2}$, assume we have $C_a$ on the $x$-axis and $C_b$ on the $y$-axis. In the first level of dissecting $C_a\times C_b$, we have four separate rectangles within $[0,1]^{2}$, equivalent to each other, each of length $a$ along the x-axis and length $b$ along the y axis. It is easy to see that within the set $S$ of lines in direction $\theta$ there is a subset $S_1$ of positive measure which intersects at least one of these four rectangles, say $R_1$, with each line $l\in S_1$ being such that $dim(l\cap R_1)\geq t$. This also guarantees a positive measure set of lines $S'_{1}$ with slope $\tan \theta \cdot (\frac{a}{b})$ each of which intersects $C_a \times C_b \subset [0,1]^{2}$ in dimension at least $t$, because the dimension is clearly invariant under the map $x\to x/a, y\to y/b$ in going from $R_1$ to $[0,1]^{2}$. As we iterate this argument, we get the  statement for all positive integers $n$. On the other hand, consider the set of all lines $S''$ with slope $\tan \theta \cdot(\frac{a}{b})^{-1}$. Within any of the four rectangles, say $R_1$ of $R_1,R_2,R_3,R_4$ in the first stage of dissection, the set of intersections $\{l\cap R_1|l\in S''\}$ is affinely equivalent to the set of intersections of all the lines with slope $\tan\theta$ intersecting $C_a\times C_b$. Thus by hypothesis, there is a positive measure of lines of $S''$ intersecting $R_1$ in dimension at least $t$, hence also intersecting $C_a \times C_b$ in dimension at least $t$. Iterating this argument gives us the result for all negative integers $n$.

For Case (ii) consider the regions $\alpha_1:=A_1\times [0,1],\alpha_2:=A_2\times [0,1]$, where $A_1$ and $A_2$ are the intervals on the $x$-axis that are cut out in the first stage of dissection of $C_a$. We note that under the transformation where $y\mapsto y, x\mapsto \frac{1}{a}x$, the line $l$ given by $y=mx+c \mapsto y=max+c$ which we call $l'$,and the dimension of $l\cap \alpha_1\cap (C_a\times C_b)$ equals the dimension of $l'\cap (C_a\times C_b)$, and a similar statement holds for lines intersecting $\alpha_2$. Now by the same basic arguments of the preceding paragraph, the result follows for Case (ii) and an identical argument gives the result for Case (iii).
\end{proof}

The arguments above easily generalizes to products of attractors of arbitrary regular self-similar IFS's.

In the statement of the previous theorem we could also require equality, and it is easily seen that the proof follows analogously:

\begin{theorem}Assume there is a set S of positive measure of lines with slope $m=\tan \theta$ so that each line of this set intersects $C_a \times C_b$ in a set of Hausdorff dimension exactly $t$ with with $t\leq \text{max}(0, \text{dim}(C_a)+\text{dim}(C_b)-1)$. Then for each fixed pair of integers $(n,m)$, there exists a set of positive measure of lines with slope $m \cdot (\frac{a^{n}}{b^{m}})$, each of which also intersects $C_a \times C_b$ in a set whose Hausdorff dimension is exactly $t$. 

\end{theorem}

Thus for Furstenberg's slicing inequality for the specific case of the product of two different central Cantor sets, if the equality is attained for a specific direction, it is also attained for countably many directions. 

Further, this same basic argument also shows that if in one specific direction of slope $\tan \theta$, there exists a set $S$ of some Hausdorff dimension $0\leq \eta\leq 1$ of lines, each of which intersects $C_a\times C_b$ in positive measure, then there is a countable set of directions with slopes $(\tan \theta)(a^{n}/b^{m})$ as $(n,m)$ varies over all possible pairs of integers, so that in each of these directions there is also a corresponding set of dimension $\eta$ of lines, each line of which intersects $C_a \times C_b$ in positive measure. \footnote{Furthermore, if in a specific direction of slope $\tan \theta$ we have a set $S$ of some Hausdorff dimension $\alpha$ of lines so that each line belonging to $S$ intersects $C_a \times C_b$ in a set of Hausdorff dimension at least ( or equal to) some $\beta$, then in each of the directions with slope $(\tan \theta)(a/b)^{n}$, we have a set of Hausdorff dimension $\alpha$ of lines so that each line of this set intersects $C_a \times C_b$ in a set of Hausdorff dimension at least (or equal to) $\beta$.}

In future work, we intend to further study the set of slices of general sets of the form $C_a\times C_b\subset [0,1]^{2}$; not necessarily satisfying the irrationality assumption. \footnote{One could conjecture that for certain quantifiably large sets of parameters $a,b$, every single slice of the set $C_a \times C_b$ should have Hausdorff dimension at most $\max\{0,\dim(C_a)+\dim(C_b)-1\}$.}

\section{Acknowledgements:} The author is grateful to Larry Guth and Omer Offen for listening to some of the arguments of this paper, to Pablo Shmerkin for useful feedback and pointing out Astels' and Wang, Jiang, Li, Zhao's papers, and also to Boris Hasselblatt for going through two earlier drafts of the manuscript, and his suggestions for improvement of the paper.

\bigskip

\appendix
\section{Sum of two middle third Cantor sets}
We reproduce a proof of the fact that $\frac{1}{2}(\C +\C) =[0,1]$. This is a well known result, but the idea behind this proof, originally from \cite{Aritro}, was used in the methods in Section 2 and 3 of this paper, and below is a simpler exposition of the idea behind those arguments.

We consider any $x\in [0,1]$; unless $x\in \C$ we consider the interval containing $x$ that is cut out from $[0,1]$ at some stage while constructing $\C$. We consider the endpoints of this interval, which both belong to the Cantor set, as two possible candidates whose average would give us $x$. If this average is slightly higher or lower, we keep moving the lower endpoint even lower, or the higher endpoint higher in a  controlled way, and get two Cauchy sequences in the process, so that in the limit we find two elements belonging to the Cantor Set whose average is exactly $x$.

\begin{theorem}\label{thm:zero} 
Every $u\in [0,1]$ is the average of two real numbers each belonging to the Cantor's middle third set.
\end{theorem}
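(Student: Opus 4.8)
The plan is to recast the claim as a statement about the sumset: $u$ is the average of two elements of $\C$ precisely when $s := 2u$ lies in $\C + \C$, so it suffices to prove $\C + \C = [0,2]$, or geometrically that the antidiagonal line $\{a+b=s\}$ meets $\C\times\C$ for every $s\in[0,2]$. Write $K_k$ for the $k$-th approximation of $\C$, the union of $2^k$ closed intervals of length $3^{-k}$, so that $\C=\bigcap_k K_k$ and each level-$k$ interval has both endpoints in $\C$. For a fixed target $s$ I would produce a nested pair of sequences of level-$k$ intervals $P_k\supseteq P_{k+1}$ and $Q_k\supseteq Q_{k+1}$ with $s\in P_k+Q_k$ at every stage; the intersections $a=\bigcap_k P_k$ and $b=\bigcap_k Q_k$ are then single points of $\C$, and since $a\in P_k$ and $b\in Q_k$ force both $s$ and $a+b$ into the interval $P_k+Q_k$ of length $2\cdot 3^{-k}$, letting $k\to\infty$ yields $a+b=s$.

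The engine of the construction is a one-step covering identity. If $P$ and $Q$ are level-$k$ intervals, each splits into its left and right thirds $P_0,P_1$ and $Q_0,Q_1$ at the next stage, and a direct computation shows the four Minkowski sums $P_i+Q_j$ exactly cover the parent sum $P+Q$ (the two middle sums coincide and overlap the two outer ones, leaving no gap). This is the manifestation of the thickness-one property of $\C$: the removed middle gap has the same length as each surviving bridge, so refining never opens a hole in the sumset. Starting from $P_0=Q_0=[0,1]$, for which $s\in[0,2]=P_0+Q_0$, the covering identity guarantees at each stage a child pair $(P_{k+1},Q_{k+1})$ contained in $(P_k,Q_k)$ with $s\in P_{k+1}+Q_{k+1}$, which is exactly the nesting required above. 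This is the rigorous form of the informal description of pushing one endpoint lower or the other higher: selecting the left or right third at each stage perturbs the tracked endpoints by at most $3^{-k}$, so the endpoint sequences are Cauchy and converge to $a,b\in\C$.

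The step I expect to carry the real content is verifying that the nested choices can always be made consistently, that a pair valid at level $k$ always refines to a valid pair at level $k+1$, rather than merely that $K_k+K_k=[0,2]$ for each $k$ in isolation. Here this consistency is immediate because the four children sums cover the \emph{entire} parent sum, so whichever $s$ sat in $P_k+Q_k$ still sits in some $P_{k+1}+Q_{k+1}$ and no backtracking is ever needed. I would flag that this is precisely the point that becomes delicate in the later sections, where products and higher powers destroy the clean symmetry and the covering identity must be replaced by a more careful accounting of how the refined Minkowski or product pieces overlap.
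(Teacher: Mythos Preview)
Your proof is correct, but it follows a different route from the paper's. The paper tracks two explicit points $a_k,b_k\in\C$ (the endpoints of the first gap containing $y$) together with the signed error $r_k-l_k$, and at each step nudges exactly one of the two points outward by $2/3^{k+1}$ so as to shrink the error below the next power of $1/3$; the monotone, Cauchy sequences $\{a_k\},\{b_k\}$ then converge in $\C$ to the desired pair. Your argument instead refines a pair of level-$k$ intervals simultaneously via the Minkowski-sum covering identity $P+Q=\bigcup_{i,j}P_i+Q_j$, and extracts the limit by nested compactness.

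Your version is cleaner and more conceptual for this particular statement, and your covering identity makes the ``no backtracking'' point completely transparent. What the paper's version buys is that its explicit error-reduction bookkeeping (bounding $|\Delta_k|$ between successive powers of $1/3$ and then adjusting a single coordinate by a controlled amount) is exactly the template reused verbatim in Sections~3--5, where the clean covering identity is no longer available; you correctly anticipated this in your final paragraph. Indeed, your approach is essentially the compactness/line-intersection argument the paper alludes to in its introduction as one of the folklore proofs, whereas the paper deliberately reproves the result in the dynamical style so that the later generalizations read as direct modifications.
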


\begin{proof}
Take an arbitrary real number $y\in [0,1]$. Unless $y\in \C$, in the process of creating the Cantor set from $[0,1]$ by deleting the middle thirds, after a finite number ($k_{0}$) of steps, $y$ would belong to an open set that's cut out for the first time. \footnote{Indeed if the ternary expansion of $y$ contains a $1$, then $y$ falls in the interior or on the left boundary of a cut open third corresponding to the first time the $1$ appears in the ternary expansion. Otherwise $y$ contains only $0 \ \text{or} \ 2$ in it's ternary expansion, and then $y\in \C$ and we are done.  The length of the interval cut out at this $k_{0}$'th iteration is $1/3^{k_{0}}$. }

(i) Let the closest end point to $y$ at this stage on the right be $a_{1}\in [0,1]$ at a distance $r_{1}:=|a_{1}-y|$, and that on the left be $b_{1} \in [0,1]$ at a distance $l_{1}:=|b_{1}-y|$ (we have $a_{1},b_{1}\in \textit{C}$).  Consider the unique $k_{1}>0$ so that $1/3^{k_{1}+1}<|l_{1}-r_{1}|\leq 1/3^{k_{1}}$. We have $k_{1}\geq k_{0}$. Consider w.l.o.g $r_{1}\geq l_{1}$. \footnote{When $l_{1} > r_{1}$, the proof follows in an analogous way.}

  (ii)  To the left of $b_{1}$, we further iterate and remove successive middle thirds so that eventually there is a point $b_{2}\in \textit{C} $ to the left of $b_{1}$ with  $l_{2}-l_{1}=2/3^{k_{1}+1}$, where $l_{2}:=|b_{2}-y|$. At this stage, take $a_{2}=a_{1},\text{and} \ r_{2}=r_{1}$.
    
    We have: $ 1/3^{k_{1}+1}-2/3^{k_{1}+1}=-1/3^{k_{1}+1}< r_{2}-l_{2}= ( r_{2}-l_{1})-( l_{2}-l_{1})\leq (1/3^{k_{1}}-2/3^{k_{1}+1})=1/3^{k_{1}+1}$, and so $|r_{2}-l_{2}|\leq 1/3^{k_{1}+1}$. 
    Thus we can find a unique $k_{2}>k_{1}$ so that $1/3^{k_{2}+1}<|r_{2}-l_{2}|\leq 1/3^{k_{2}}$. 
    
    Now we perform steps exactly analogous to the steps (i) and (ii) above. It may happen that at the $k$'th stage, we have $l_{k}>r_{k}$. In this case, corresponding to (ii), we would find a point $a_{k+1}$ to the right of $a_{k}$, while keeping $b_{k+1}=b_{k}$. The sequence $s_{k}=|r_{k}-l_{k}|$
     in the k'th iterative
step is bounded by a higher power of $1/3$, and so $s_{k}\rightarrow 0$ as $k \rightarrow \infty$. 

Here $\{a\}_{i=1}^{\infty}(\{b\}_{i=1}^{\infty})$ is bounded within [0,1], is non-decreasing (non-increasing) and thus converges to a limit point
$a_{\infty}(b_{\infty})$ that also belongs to the Cantor set itself, the Cantor set being closed. In the limit, we thus have within the Cantor set two points that are equidistant
from $y$ (with $r_{\infty} = |a_{\infty}-y|=|b_{\infty}-y|= l_{\infty}$), and this proves our assertion.
\end{proof}

\bigskip

\bigskip

Aritro Pathak, University of Missouri, Math Sciences Building, 810 Rollins St, Columbia, MO 65201

\end{document}